\renewcommand{\title}[1]{%
  \centerline{\textbf{#1}}
  \vspace{36pt}}
\renewcommand{\date}[1]{%
\vspace{-28pt}
\centerline{\small #1}
\vspace{28pt}}
\newtheorem*{conjecture*}{Conjecture}
\newtheorem*{corollary*}{Corollary}
\newtheorem*{frenchcorollary*}{Corollaire}
\newtheorem*{theorem*}{Theorem}
\newtheorem*{frenchtheorem*}{Th\'eor\`eme}
\theoremstyle{definition}
\begin{document}

\begin{center}
\bfseries 
An oft cited letter from Tate to Serre on computing local
heights on elliptic curves
\end{center}

\noindent
This document contains a letter written by John Tate to Jean-Pierre
Serre in~1979 in which Professor Tate describes a fast and elegant algorithm for
computing local canonical heights on elliptic curves over local
fields~$K_v$ other than~$K_v=\mathbb{C}$. Accounts of Tate's algorithm
were published in~[1] and~[2], the former including a modified
algorithm that also allows $K_v=\mathbb{C}$. But over the years there
have been numerous articles that make direct reference to the original
handwritten letter, so Professor Tate has kindly given me permission
to post this typeset version on the ArXiv.
\begin{flushright}
\begin{tabular}{l}
Joseph H. Silverman\\July 24, 2012\\
\end{tabular}
\end{flushright}

{\par\noindent\hangindent1.5em \hangafter1
\hbox to1.5em{[1]\hfil\enspace}%
J.~H. Silverman, Computing heights on elliptic curves,
\emph{Math. Comp.} \textbf{51} (1988), 339--358.
\par\noindent\hangindent2em \hangafter1
\hbox to1.5em{[2]\hfil\enspace}%
H. Tsch{\"o}pe and H. Zimmer, 
Computation of the {N}\'eron-{T}ate height on elliptic curves,
\emph{Math. Comp.} \textbf{48} (1987), 351--370.\par}

\vspace{12pt}
\hbox to\hsize{\hfil\hbox to.75\hsize{\hrulefill}\hfil}
\vspace{8pt}

\begin{flushright}
October 1, 1979
\end{flushright}

\noindent
Dear Serre,

\vspace{12pt}
\noindent
Here is an appendix to my letter of 21 June 1968. I want to explain what seems
to me an efficient way to compute $\lambda_\infty$, the ``canonical
quasifunction'' on an elliptic curve over $\mathbb{R}$. The method can be used
on an HP25 (see enclosed programs). On the other hand, from a theoretical
point of view it can be used to show the existence of the $\lambda_v$ on any
local field $K_v$ \emph{except} in case $K_v \cong \mathbb{C}$. The trick is to
use as parameter a function $t = \frac{1}{x+r}$ with a
double zero at 0, but with $r$ chosen so that the 2 poles of $t$ are conjugate
quadratic over $K_v$ rather than being in $K_v$, so that \emph{$t$ is bounded}
on the group $E(K_v)$ of rational points, and the computer can compute
$t(2^n P)$ by iteration, \emph{never overflowing.} The method is summed up in:
\begin{theorem*}
Let $K$ be a local field. Let $E$ be an elliptic curve over $K$ defined by the
usual equation 
\[
  y^2 + a_1xy + a_3y = x^3 + a_2x^2 + a_4x +a_6
\]
and define $b_2$, $b_4$, $b_6$, and $b_8$ as usual. Suppose there is
an open subgroup $\Gamma$ of $E(K)$ such that 
\underline{\smash{$x(P)$ does not take the value $0$ for $P\in \Gamma$}.}
\par
\textup{[Examples:} $(K = \mathbb{R})$ If
  $b_6 < 0$, we can take $\Gamma = E(\mathbb{R})$; if $\Delta > 0$, and
  $b_2 = 0$ \textup{(}or more generally if the point of order~2 on the
  connected component of $0$ has a strictly positive
  $x$-coordinate\textup{)} then we can take $\Gamma =
  E^0(\mathbb{R})$, the connected component of $0$ in
  $E(\mathbb{R})$.\textup{]}
\par
Suppose $P \in \Gamma$. Define sequences $(t_n)$, $(W_n)$, and $(Z_n)$,
$n\geq 0$, inductively by:
\[
  t_0 = \frac{1}{x(P)} \text{ and }t_{n+1} = \frac{W_n}{Z_n}, 
  \text{ where }
  \left\{
    \begin{aligned}
    W_n &= 4t_n +b_2t^2_n +2b_4t^3_n + b_6t^4_n, \\
    Z_n &= 1-b_4 t^2_n -2b_6t^3_n -b_8 t^4_n.\\
    \end{aligned}
  \right.
\]
There are constants $\varepsilon > 0$ and $M$, independent of $P$, such that
\[
  \text{$|t_n| \leq M$\quad and\quad $|Z_n| \geq \varepsilon$\quad for all $n$,}
\]
and consequently there is a bound $N$ such that $\log |Z_n|\leq N$ for
all $n$ and all $P\in \Gamma$.  Put
\[
  \mu(P) = \sum^\infty_{n=0} \frac{1}{4^n} \log |Z_n|
  \quad\text{and}\quad
  \lambda(P) = \frac{1}{2} \log \bigl|x(P)\bigr|+ \frac{1}{8} \mu(P).
\]
Then $\lambda$ is a function continuous on $\Gamma - \{0\}$, bounded outside
of any neighborhood of $0$ in $\Gamma$, such that
\[
  \lambda(P) -\frac{1}{2}\log \bigl|x(P)\bigr| \to 0\quad\text{as}\quad P\to 0,
\]
and such that
\[
  \lambda(2P) = 4\lambda(P)-\log\bigl|Y(P)\bigr|,
  \quad\text{if $2P \neq 0$, where $Y=2y+a_1x + a_3$.}
\]
\end{theorem*}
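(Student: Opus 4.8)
The plan is to recognize the recursion as the iterated doubling map written in the coordinate $t = 1/x$, and then to extract the uniform bounds from the compactness of $\Gamma$. First I would record the duplication formula
\[
  x(2P) = \frac{x^4 - b_4 x^2 - 2b_6 x - b_8}{4x^3 + b_2 x^2 + 2b_4 x + b_6}
\]
together with the identity $Y^2 = 4x^3 + b_2 x^2 + 2b_4 x + b_6$, where $Y = 2y + a_1 x + a_3$; dividing numerator and denominator by $x^4$ shows that the map $t \mapsto W(t)/Z(t)$ sends $1/x(P)$ to $1/x(2P)$. Hence by induction $t_n = 1/x(2^n P)$, and moreover $W_n = Y(2^n P)^2/x_n^4$ while $Z_n = (x_n^4 - b_4 x_n^2 - 2b_6 x_n - b_8)/x_n^4$, the latter equal to $x(2^{n+1}P)\,Y(2^n P)^2/x_n^4$ whenever $2^{n+1}P \neq 0$.

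The main obstacle is the uniform estimate $|t_n| \leq M$ and $|Z_n| \geq \varepsilon$, and this is where I would use that $\Gamma$ is compact: as an open subgroup of the compact group $E(K)$ it is also closed, hence compact. The function $t = 1/x$ is continuous on $\Gamma$, since its only poles occur where $x = 0$ and these are excluded by hypothesis, while at $0 \in \Gamma$ it takes the value $0$. Thus $t(\Gamma)$ is compact and $M := \sup_{Q \in \Gamma}|t(Q)|$ is finite; as $2^n P \in \Gamma$ this gives $|t_n| \leq M$. For the lower bound I would consider the continuous function $g(Q) := Z(t(Q))$ on $\Gamma$ and show it never vanishes there. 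Indeed $g(Q) = 0$ forces the doubling numerator $x_Q^4 - b_4 x_Q^2 - 2b_6 x_Q - b_8$ to vanish; when $2Q \neq 0$ this numerator equals $x(2Q)\,Y(Q)^2$, which is nonzero because $2Q \in \Gamma$ keeps $x(2Q) \neq 0$ and $2Q \neq 0$ forces $Y(Q) \neq 0$, while when $2Q = 0$ the image $2Q$ is a pole of $x$, so the numerator is again nonzero. By compactness $|g|$ attains a positive minimum $\varepsilon$, giving $|Z_n| \geq \varepsilon$; combined with the evident bound $|Z_n| \leq 1 + |b_4|M^2 + 2|b_6|M^3 + |b_8|M^4$ this confines $\log|Z_n|$ between two constants, uniformly in $P$.

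With the bounds in hand the analytic assertions follow quickly. The series defining $\mu$ is dominated termwise by $4^{-n}\max(N,\,|\log \varepsilon|)$, so it converges uniformly on $\Gamma$; each $Z_n(\,\cdot\,)$ is continuous and bounded away from $0$, so $\mu$, and hence $\lambda$ on $\Gamma - \{0\}$, is continuous, and boundedness off a neighborhood of $0$ comes from continuity on the compact complement. For the functional equation I would observe that replacing $P$ by $2P$ shifts the sequence, $Z_n(2P) = Z_{n+1}(P)$, whence $\mu(2P) = 4\mu(P) - 4\log|Z_0|$; substituting $\log|Z_0| = \log|x(2P)| + 2\log|Y(P)| - 4\log|x(P)|$ into the definition of $\lambda$ yields $\lambda(2P) = 4\lambda(P) - \log|Y(P)|$ after cancellation.

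Finally, for the limit $\tfrac18\mu(P) = \lambda(P) - \tfrac12\log|x(P)| \to 0$ as $P \to 0$, I would exploit that $t_0 = 1/x(P) \to 0$. From $W(t) = 4t + O(t^2)$ and $Z(t) = 1 + O(t^2)$ one gets $t_{n+1} = 4t_n\bigl(1 + O(t_n)\bigr)$, so $t_n$ stays comparable to $4^n t_0$ until it reaches size $O(1)$, and there $\log|Z_n| = \log|1 - b_4 t_n^2 - \cdots| = O(t_n^2)$. Splitting the sum at the index $n_0 \approx \log_4(1/|t_0|)$ where $t_n$ becomes of order one, the head $\sum_{n < n_0} 4^{-n}O(t_n^2)$ is $O(|t_0|)$ and the tail $\sum_{n \geq n_0} 4^{-n}O(1)$ is $O(4^{-n_0}) = O(|t_0|)$, so $\mu(P) \to 0$. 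I expect the bookkeeping in this last estimate, and the verification that $t_{n+1} \approx 4t_n$ holds uniformly while $t_n$ is small, to be the only genuinely delicate points; everything else is formal manipulation together with the compactness of $\Gamma$.
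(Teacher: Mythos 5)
Your argument is correct and, for most of the theorem, follows the same path as Tate's: the duplication formula $x(2P)=x^4Z/Y^2$, the identification $Z_n=Z(2^nP)$, uniform bounds via compactness of $\Gamma$ (which you make explicit where the letter merely asserts $|x(P)|\geq\varepsilon$ and the nonvanishing of $Z$ on $\Gamma$ --- a worthwhile addition, though your case $2Q=0$ tacitly uses that the numerator and denominator of the duplication formula are coprime, i.e.\ that their resultant is a power of $\Delta$), and the shift $Z_n(2P)=Z_{n+1}(P)$ to get the functional equation. The one place you genuinely diverge is the limit $\lambda(P)-\tfrac12\log|x(P)|\to0$: you track the orbit $t_n\approx 4^n t_0$ and split the series at the index where $t_n$ becomes of order one. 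This works, but it is exactly the ``delicate'' bookkeeping you fear, it is written for $K=\mathbb{R}$ (over a non-archimedean field $|4|$ need not be $4$ and $t_n$ need not grow at all --- there the claim is trivial since $\log|Z_n|=0$ identically near $0$), and your head estimate $O(|t_0|)$ is slightly optimistic since the multiplicative errors $1+O(t_n)$ only give $|t_n|\leq C4^n|t_0|$ and $n_0\leq\log_3(C'/|t_0|)$, yielding $O(|t_0|^c)$ for some $0<c<1$; the conclusion still holds. Tate's route is both shorter and uniform over all $K$: you have already shown the series for $\mu$ converges uniformly on all of $\Gamma$, each summand is continuous at $P=0$ (where $t=0$ and $Z=1$), so $\mu$ is continuous at $0$ with $\mu(0)=\sum 4^{-n}\log 1=0$ (equivalently, put $P=0$ in $\mu(P)=\log|Z(P)|+\tfrac14\mu(2P)$), and the limit is immediate. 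You could delete your entire last paragraph and replace it with this one observation.
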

\begin{proof}
We have $Y^2 = 4x^3 + b_2x^2 + 2b_4x + b_6$. Let $x_2 = x(2P)$.  Then
\begin{equation}
  x_2 = \frac{x^4 -b_4x^2 -2b_6x -b_8}{4x^3 +b_2x^2 +2b_4x +b_6} =
  \frac{x^4Z}{Y^2} ,
  \tag{$*$}
  \label{eqn1}
\end{equation}
where 
\[
  Z=1 -b_4x^{-2} -2b_6x^{-3} -b_8x^{-4}.
\]
Now, under our underlined hypothesis, there is an $\varepsilon > 0$
such that
\[
  \text{$\bigl|x(P)\bigr|\geq \varepsilon$\quad for\quad $P \in \Gamma$.}
\]
Hence the function
$x^{-1}$ is bounded on $\Gamma$, and so $Z$ is also. But $Z$ is
bounded away from $0$, too, on $\Gamma$, because $|x_2|\geq \varepsilon$,
and $Z$ has no zero in common with $x^{-1}$ and $Y$.  Thus $\log |Z|$
is bounded on $\Gamma$. From \eqref{eqn1} we see that, in the notation
of the theorem, we have $Z_n = Z(2^n P)$, and consequently
\[
  \mu(P) = \sum^\infty_{n=0} \frac{1}{4^n} \log \bigl|Z(2^n P)\bigr|
\]
is well-defined, continuous, and bounded on $\Gamma$; and satisfies
\begin{equation}
  \mu(P) = \log \bigl|Z(P)\bigr| + \frac{1}{4}\mu(2P).
  \tag{$**$}
  \label{eqn2}
\end{equation}
From \eqref{eqn1} we find for $P$ such that $2P \neq 0$,
\[
  \log \bigl|x(2P)\bigr| 
   = 4\log\bigl|x(P)\bigr|+\log\bigl|Z(P)\bigr| - 2\log\bigl|Y(P)\bigr|.
\]
Eliminating $\log \bigl|Z(P)\bigr|$ from these last two equations we
find that the function
\[
  \lambda = \frac{1}{2} \log|x|+\frac{1}{8}\mu
\]
 satisfies
\[
  \lambda(2P) = 4\lambda(P) - \log \bigl|Y(P)\bigr|
  \quad\text{for $P \in \Gamma$, $2P \neq 0$.}
\]
Moreover,
\[
  \lambda(P) - \frac{1}{2}\log \bigl|x(P)\bigr| = \frac{1}{8}\mu(P) \to 0\quad 
  \text{as}\quad P \to 0,
\]
because $\mu$ is continuous and $\mu(0) = 0$, as one sees by putting $P=0$
in \eqref{eqn2} and using $Z(0)=1$.
\end{proof}

\vspace{12pt}
\noindent
Salut et Fraternit\'e

\vspace{12pt}
\noindent
J. Tate

\end{document}